\newtheorem{theorem}{Theorem}[section]
\newtheorem{lemma}[theorem]{Lemma}
\newtheorem{corollary}[theorem]{Corollary}
\newtheorem{remark}[theorem]{Remark}
\title{A contribution to the connections between Fibonacci Numbers and Matrix Theory}
\author{
Miriam Farber \thanks{Department of Mathematics, Technion-IIT,
Haifa 32000, Israel (miriamf@techunix.technion.ac.il, \newline
berman@techunix.technion.ac.il).}
\and
Abraham Berman \footnotemark[1]}
\begin{document}
\maketitle
\begin{abstract}
We present a lovely connection between the Fibonacci numbers and the sums of inverses of $(0,1)-$ triangular matrices, namely, a number $S$ is the sum of the entries of the inverse of an $n \times n$ $(n \geq 3)$ $(0,1)-$ triangular matrix iff $S$ is an integer between $2-F_{n-1}$ and $2+F_{n-1}$. Corollaries include Fibonacci identities and a Fibonacci type result on determinants of family of (1,2)-matrices.

\end{abstract}

\section{\textbf{Introduction}}
One of the ways to motivate students' interest in linear algebra is to present interesting connections between matrices and the Fibonacci numbers \begin{center}
$F_1=F_2=1$ ; $F_n=F_{n-1}+F_{n-2}$ $ (n \geq 3)$.
\end{center}
For example, one can prove that $F_n^2-F_{n-1}F_{n+1}=(-1)^{n+1}$ by using induction and the fact that\\
\\
$\det\left(
                                                                                \begin{array}{cc}
                                                                                  F_n & F_{n-1} \\
                                                                                  F_{n+1} & F_n \\
                                                                                \end{array}
                                                                              \right)=\det\left(
                                                                                \begin{array}{cc}
                                                                                  F_n & F_{n-1} \\
                                                                                  F_{n+1}-F_n & F_n-F_{n-1} \\
                                                                                \end{array}
                                                                              \right)=\det\left(
                                                                                \begin{array}{cc}
                                                                                  F_n & F_{n-1} \\
                                                                                  F_{n-1} & F_{n-2} \\
                                                                                \end{array}
                                                                              \right)=-\det\left(
                                                                                \begin{array}{cc}
                                                                                  F_{n-1} & F_{n} \\
                                                                                  F_{n-2} & F_{n-1} \\
                                                                                \end{array}
                                                                              \right)$.\\
\\
Similarly, one can determine the exact value of the $n^{th}$ Fibonacci number, by calculating the eigenvalues and the eigenvectors of $\left(
                                                                                                                                         \begin{array}{cc}
                                                                                                                                            1& 1\\
                                                                                                                                            1& 0\\
                                                                                                                                         \end{array}
                                                                                                                                       \right)$ and using the equation:\\
\begin{center}
$\left(
   \begin{array}{c}
     F_n \\
     F_{n-1} \\
   \end{array}
 \right)=\left(
           \begin{array}{cc}
             1 & 1 \\
             1 & 0 \\
           \end{array}
         \right)\left(
                  \begin{array}{c}
                    F_{n-1} \\
                    F_{n-2} \\
                  \end{array}
                \right)=\ldots=\left(
                                 \begin{array}{cc}
                                   1 & 1 \\
                                   1 & 0 \\
                                 \end{array}
                               \right)^{n-2}\left(
                                              \begin{array}{c}
                                                1 \\
                                                1 \\
                                              \end{array}
                                            \right).$
\end{center}
As another example of connections between Fibonacci numbers and Matrix Theory, consider lower triangular matrices of the form\\
\begin{center}
 $\left(
                                                                            \begin{array}{cccccc}
                                                                              1 & 0 & \cdots & \cdots & \cdots & 0 \\
                                                                              -1 & 1 & 0 & \cdots & \cdots & 0 \\
                                                                              -1 & -1 & 1 & 0 & \cdots & 0 \\
                                                                              0 & -1 & -1 & 1 & \ddots & \vdots \\
                                                                              \vdots & \ddots & \ddots & \ddots & \ddots & \vdots \\
                                                                              0 & \cdots & 0 & -1 & -1 & 1 \\
                                                                            \end{array}
                                                                          \right).$
\end{center}
The inverses of these matrices are of the form
\\
\begin{center}
 $\left(
                                                                            \begin{array}{ccccccc}
                                                                              1 & 0 & \cdots & \cdots & \cdots & \cdots & 0 \\
                                                                              1 & 1 & 0 & \cdots & \cdots & \cdots & 0 \\
                                                                              2 & 1 & 1 & 0 & \cdots & \cdots & 0 \\
                                                                              3 & 2 & 1 & 1 & 0& \ddots & \vdots \\
                                                                              5 & 3 & 2 & 1 & \ddots & \ddots & \vdots \\
                                                                              \vdots & \ddots & \ddots & \ddots & \ddots & \ddots & \vdots \\
                                                                              \vdots & \cdots &5 & 3 & 2 & 1 & 1 \\
                                                                            \end{array}
                                                                          \right).$
\end{center}
which, due to their remarkable structure, are known as Fibonacci matrices. Various properties of these matrices and their generalizations have been studied, e.g. \cite{Lee, Kim, Wang}.\\
Another interesting connection is given in \cite{Ching_Li}, where it is shown that the maximal determinant of an $n\times n$ (0,1)-Hessenberg matrix is $F_n$.\\
\\
Let $S(X)$ denote the sum of the entries of a matrix $X$. In \cite{Bit-Shun}, Huang, Tam and Wu show, among other results, that a number $S$ is equal to $S(A^{-1})$ for an adjacency matrix (a symmetric $(0,1)-$ matrix with trace zero) $A$ iff $S$ is rational. More generally, they ask what can be said about the sum of the entries of the inverse of a $(0,1)-$ matrix. We consider the class of triangular matrices and show that a number $S$ is equal to $S(A^{-1})$ for a triangular $(0,1)-$ matrix $A$ iff $S$ is an integer. This follows from our main result which shows that for $n \geq 3$, a number $S$ is equal to $S(A^{-1})$ for an $n \times n$ triangular $(0,1)-$ matrix $A$ iff
\begin{center}
$2-F_{n-1} \leq S \leq 2+F_{n-1}$.\\
\end{center}

We use the following definitions and notations.\\
$e$ denotes a vector of ones (so $S(A)=e^TAe$).\\
$A_n$ denotes the set of $n \times n$ invertible $(0,1)-$ upper triangular matrices.\\
We will say that a matrix $A \in A_n$, $n \geq 3$ is \emph{maximizing} if $S(A^{-1})=2+F_{n-1}$ and \emph{minimizing} if $S(A^{-1})=2-F_{n-1}$, and refer to maximizing and minimizing matrices as \emph{extremal matrices}.\\
For a set of vectors $V \subset R^{n}$, a vector $v \in V$ is \emph{absolutely dominant} if for every $u \in V$, $|v_i| \geq |u_i|,\\ i=1,2,\ldots,n$.\\

We will use the following well known properties of Fibonacci numbers (see for example \cite{Nicolai}):
\begin{lemma}\label{lemm:fiball}
\begin{enumerate}
  \item $1+\sum_{k=1}^{n} F_{k}=F_{n+2}$
  \item $1+\sum_{k=1}^{n} F_{2k}=F_{2n+1}$
  \item $\sum_{k=1}^{n} F_{2k-1}=F_{2n}$
\end{enumerate}
\end{lemma}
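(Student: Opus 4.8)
The plan is to prove all three identities in a uniform way by telescoping, exploiting the single structural fact that the defining recurrence $F_{m+1}=F_m+F_{m-1}$ can be rewritten as $F_m=F_{m+1}-F_{m-1}$. First I would extend the sequence by setting $F_0=0$, which is forced by and consistent with the recurrence (it gives $F_2=F_1+F_0$, i.e. $1=1+0$); this lets the boundary terms come out cleanly and avoids awkward special-casing of the first summand.

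For part (1), I would apply $F_k=F_{k+2}-F_{k+1}$ and collapse the sum:
\begin{equation*}
\sum_{k=1}^{n}F_k=\sum_{k=1}^{n}(F_{k+2}-F_{k+1})=F_{n+2}-F_2=F_{n+2}-1,
\end{equation*}
which is exactly $1+\sum_{k=1}^{n}F_k=F_{n+2}$. Parts (2) and (3) are the even- and odd-indexed refinements and telescope in the same manner. Using $F_{2k}=F_{2k+1}-F_{2k-1}$ gives $\sum_{k=1}^{n}F_{2k}=F_{2n+1}-F_1=F_{2n+1}-1$, proving (2); and using $F_{2k-1}=F_{2k}-F_{2k-2}$ gives $\sum_{k=1}^{n}F_{2k-1}=F_{2n}-F_0=F_{2n}$, proving (3), where the convention $F_0=0$ is what makes the lower endpoint vanish.

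As a built-in consistency check, I would note that (2) and (3) must recombine into (1): splitting $\sum_{k=1}^{2n}F_k$ into its odd- and even-indexed parts yields $F_{2n}+(F_{2n+1}-1)=F_{2n+2}-1$, which agrees with (1) evaluated at index $2n$. This cross-check costs nothing and guards against an off-by-one slip in the telescoping.

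Honestly, there is no genuine obstacle here: these are textbook Fibonacci summation identities, and the entire content is the reindexing of a telescoping sum. The only place demanding a moment of care is the treatment of the boundary terms ($F_0$, $F_1$, $F_2$) and the alignment of summation indices after substituting the recurrence; once $F_0=0$ is fixed, each identity is a one-line computation. An equally valid alternative would be a routine induction on $n$, with the base case $n=1$ checked directly and the inductive step supplied verbatim by the recurrence, but the telescoping argument is shorter and treats all three statements in parallel.
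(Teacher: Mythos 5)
Your proof is correct. There is, however, nothing in the paper to compare it against: the authors state Lemma \ref{lemm:fiball} without proof, presenting the three identities as well-known facts and citing Vorobiev's book \cite{Nicolai}, so your telescoping argument is supplying a proof where the paper offers only a reference. The derivations themselves all check out: $F_k=F_{k+2}-F_{k+1}$ collapses part (1) to $F_{n+2}-F_2=F_{n+2}-1$; $F_{2k}=F_{2k+1}-F_{2k-1}$ collapses part (2) to $F_{2n+1}-F_1=F_{2n+1}-1$; and $F_{2k-1}=F_{2k}-F_{2k-2}$ collapses part (3) to $F_{2n}-F_0=F_{2n}$, where your convention $F_0=0$ is the standard one, forced by the recurrence, and correctly makes the lower boundary term vanish. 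Your consistency check that parts (2) and (3) recombine into part (1) at index $2n$ is also accurate. One minor remark worth making explicit: your convention $F_0=0$ is purely internal to this proof and does not conflict with the paper, which in Lemma \ref{lemm:dominantvector} temporarily sets $F_0\equiv -1$ but explicitly restricts that ad hoc convention to that lemma alone; the statement of Lemma \ref{lemm:fiball} involves only $F_k$ for $k\geq 1$, so no clash arises.
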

The main result of the paper is proved in the next section. In the third section we describe a construction of extremal matrices with a beautiful Fibonacci pattern of their inverses, and use it to obtain several Fibonacci identities. The paper is concluded with a Fibonacci type result on determinants of (1,2)-matrices, which is in spirit of the result in \cite{Ching_Li}.

\section{\textbf{The main result}}
\begin{theorem}\label{thm:mainresult}
Let $n \geq 3$. Then $S=S(A^{-1})$, for some $A \in A_n$ iff $S$ is an integer between $2-F_{n-1}$ and $2+F_{n-1}$;
$2-F_{n-1} \leq S \leq 2+F_{n-1}$.
\end{theorem}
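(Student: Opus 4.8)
The plan is to reduce the whole statement to a single extremal fact about the $1$-norm of $A^{-1}e$, and then to induct. First I would record the easy direction: every $A\in A_n$ is upper triangular with all diagonal entries equal to $1$ (invertibility forces the $(0,1)$ diagonal entries to be $1$), so $\det A=1$, $A^{-1}$ is an integer matrix, and $S(A^{-1})=e^{T}A^{-1}e\in\mathbb Z$. To handle the sum I set $x=A^{-1}e$, so that $S(A^{-1})=e^{T}x=\sum_i x_i$ and $Ax=e$ is solved by back-substitution: $x_n=1$ and $x_i=1-\sum_{j>i}a_{ij}x_j$. Splitting off the first row and column, so that $A$ has first row $(1,r^{T})$ with $r\in\{0,1\}^{n-1}$ and trailing principal submatrix $A'\in A_{n-1}$, and writing $x'=(A')^{-1}e$, gives the basic recursion $x=(1-r^{T}x',\,x')$ and hence $S(A^{-1})=S((A')^{-1})+1-r^{T}x'$. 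For $A$ a matrix in $A_m$ let $P(A)$ and $Q(A)$ denote the sum of the positive entries and the sum of the absolute values of the negative entries of $A^{-1}e$. Since $r^{T}x'$ ranges over subset sums of the entries of $x'$ it lies in $[-Q(A'),P(A')]$, and using $S((A')^{-1})=P(A')-Q(A')$ I obtain the uniform two-sided bound $1-Q(A')\le S(A^{-1})\le 1+P(A')$.

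Thus the theorem's two bounds follow once I prove $\max_{A_{n-1}}P=1+F_{n-1}$ and $\max_{A_{n-1}}Q=F_{n-1}-1$. The same decomposition, now optimized over $r$, produces clean recursions: choosing $r$ to subtract exactly the negative entries of $x'$ makes the new top entry $1+Q(A')>0$, whence $\max_{A_m}P=1+L(m-1)$; symmetrically $\max_{A_m}Q=L(m-1)-1$, where $L(m):=\max_{A\in A_m}\|A^{-1}e\|_1=\max_{A\in A_m}\bigl(P(A)+Q(A)\bigr)$. So the entire result collapses to the single claim $L(m)=F_{m+1}$.

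I would prove $L(m)=F_{m+1}$ by induction on $m$, carrying the three quantities $L(m)$, $\max_{A_m}P$, and $\max_{A_m}Q$ simultaneously. For the upper bound, $\|A^{-1}e\|_1=|1-r^{T}x'|+\|x'\|_1\le\max\bigl(1+Q(A'),\,P(A')-1\bigr)+P(A')+Q(A')$; regrouping as $(P'+Q')$ plus either $Q'$ or $P'$ and splitting on which term in the maximum dominates, each case is controlled by $P'+Q'\le L(m-1)=F_m$ together with $\max_{A_{m-1}}Q=F_{m-1}-1$ (first case) or $\max_{A_{m-1}}P=1+F_{m-1}$ (second case), and both collapse to $F_m+F_{m-1}=F_{m+1}$. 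The main obstacle is precisely this coupling: $P'$ and $Q'$ cannot be made large at once, so a bound on $L(m-1)$ alone is too weak and one is forced to carry $\max P$ and $\max Q$ as separate inductive data. For attainment (the bound $L(m)\ge F_{m+1}$) I would exhibit the \emph{absolutely dominant} vector, namely the explicit $x$ with alternating signs and $|x_{n-k}|=F_k$ realized by a suitable matrix; its coordinatewise maximality forces it to maximize the $1$-norm, and $\|x\|_1=1+\sum_{k=1}^{n-1}F_k=F_{n+1}$ by part (1) of Lemma~\ref{lemm:fiball}, while computing its $P$ and $Q$ (each a sum of Fibonacci numbers of one parity) uses parts (2)--(3).

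Finally, to show that every integer $S$ in $[2-F_{n-1},2+F_{n-1}]$ occurs, I would exploit that the magnitudes $1,1,1,2,3,5,\dots$ of the dominant vector form a complete sequence (each term is at most one more than the sum of the earlier ones), so for a fixed extremal submatrix $A'$ the subset sums $r^{T}x'$ realize every integer in $[-Q(A'),P(A')]$; hence the reachable values $S((A')^{-1})+1-r^{T}x'$ fill the whole integer interval $[1-Q(A'),1+P(A')]$. Taking one submatrix maximizing $P'$ and one maximizing $Q'$ gives two such integer intervals reaching the endpoints $2+F_{n-1}$ and $2-F_{n-1}$ respectively; since both contain values near $1$ they overlap, so their union is all of $[2-F_{n-1},2+F_{n-1}]$. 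Combined with integrality and the two-sided bound, this establishes the theorem.
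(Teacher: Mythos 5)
Your proposal is correct, but it takes a genuinely different route from the paper. The paper peels off the \emph{last two} rows and columns at once (a block $C\in A_{n-2}$ plus columns $\alpha,\beta$ and a step-2 induction) and proves a stronger, componentwise statement: Lemma~\ref{lemm:dominantvector}, that $v_i=(-1)^iF_{i-1}$ is an absolutely dominant vector of $\{e^TA^{-1}\mid A\in A_n\}$; the extremal sums then follow from the reduction to $x=0$ and the formulas (9)--(10), and the intermediate values from the Zeckendorf-type Lemma~\ref{lemm:lastlemma}, which realizes each $T$ as a sum of distinct Fibonacci numbers via the choice of $\alpha,\beta$. You instead peel \emph{one} row at a time via $S(A^{-1})=S((A')^{-1})+1-r^Tx'$ and replace the vector-valued lemma by scalar bookkeeping: a coupled induction on $L(m)=\max_{A_m}\|A^{-1}e\|_1$, $\max P$, and $\max Q$, with $\max P_m=1+L(m-1)$ and $\max Q_m=L(m-1)-1$, collapsing everything to $L(m)=F_{m+1}$; I checked the case split in your upper bound and it is sound, and you are right that carrying $\max P$ and $\max Q$ as separate inductive data is forced. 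For surjectivity, your completeness argument (subset sums $r^Tx'$ of the signed magnitudes $1,1,1,2,3,5,\dots$ fill every integer in $[-Q',P']$, and the two intervals from a max-$P'$ and a max-$Q'$ submatrix overlap near $1$) cleanly replaces Lemma~\ref{lemm:lastlemma} and treats both halves of the range symmetrically. What the paper's stronger lemma buys is reuse: the explicit extremal matrices and Fibonacci identities of Section 3 need each coordinate's extremal value, not just the $1$-norm; your version is leaner if only Theorem~\ref{thm:mainresult} is wanted. Three small repairs: the recursions $\max P_m=1+L(m-1)$ and $\max Q_m=L(m-1)-1$ are inconsistent at $m=1$ (where $\max Q_1=0$), so start the coupled induction from explicit base cases $m=1,2$; attainment of $L(m)=F_{m+1}$ should be justified by direct computation for the recursively built matrix (alternately taking $r$ to be the indicator of the negative, then positive, entries of $x'$), not by the remark that ``coordinatewise maximality forces $1$-norm maximality,'' since you never prove absolute dominance and do not need it; and the interval-filling claim deserves its one-line induction (remove the largest magnitude $a$; since $a$ is at most $1$ plus the sum of the remaining magnitudes, the subset-sum intervals with and without $a$ overlap, so their union is an interval).
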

\begin{proof}
Obviously, $S(A^{-1})$ must be an integer since $A^{-1}=\frac{adj(A)}{\det(A)}$ and $\det(A)=1$.\\
The main part of the proof consists of
\begin{description}
  \item[(a)] $\max_{A \in A_n} S(A^{-1})=2+F_{n-1}$.
  \item[(b)] $\min_{A \in A_n} S(A^{-1})=2-F_{n-1}$.
  \item[(c)] For every integer $S$ between $2-F_{n-1}$ and $2+F_{n-1}$ there exists $A \in A_n$ such that $S(A^{-1})=S$.
\end{description}
To show (a) and (b) we prove
\begin{lemma}\label{lemm:dominantvector}
Let $V= \big\{e^TA^{-1} | A \in A_n \big\}$. Let us denote, only for the purpose of this lemma, $F_0 \equiv -1$ (this is not a Fibonacci number). Then $v=(v_i)$; $v_i=(-1)^iF_{i-1}$

is an absolutely dominant vector of $V$.
\end{lemma}
\begin{proof}
For $n=1$,  $V=\big\{\left(
                     \begin{array}{c}
                       1 \\
                     \end{array}
                   \right)\big\}$, for $n=2$ , $V=\big\{\left(
                                                              \begin{array}{cc}
                                                                1 & 1 \\
                                                              \end{array}
                                                            \right), \left(
                                                                       \begin{array}{cc}
                                                                         1 & 0 \\
                                                                       \end{array}
                                                                     \right)\big\}$, and for $n=3$ ,\\ $V=\big\{\left(
                                                                                                              \begin{array}{ccc}
                                                                                                                1 & 1 & 1 \\
                                                                                                              \end{array}
                                                                                                            \right),\left(
                                                                                                                      \begin{array}{ccc}
                                                                                                                        1 & 0 & 1 \\
                                                                                                                      \end{array}
                                                                                                                    \right),\left(
                                                                                                                              \begin{array}{ccc}
                                                                                                                                1 & 1 & 0 \\
                                                                                                                              \end{array}
                                                                                                                            \right),\left(
                                                                                                                                      \begin{array}{ccc}
                                                                                                                                        1 & 0 & 0 \\
                                                                                                                                      \end{array}
                                                                                                                                    \right),\left(
                                                                                                                                              \begin{array}{ccc}
                                                                                                                                                1 & 1 &-1 \\
                                                                                                                                              \end{array}
                                                                                                                                            \right)
                                                                     \big\}.$  Therefore the statement holds for $n=1,2,3$. In order to prove the lemma for $n \geq 4$ we will use induction. Suppose the assumption is true for $k<n$. We will show that it is true for $k=n$.

We are going to show now that the vector $v$ that is defined in the lemma is an absolutely dominant vector of the set $V=\big\{e^TA^{-1} | A \in A_n \big\}$. Let $A \in A_n$, then $A$ is of the form $\left(
                                            \begin{array}{ccc}
                                              C & \alpha & \beta \\
                                              0 & 1 & x \\
                                              0 & 0 & 1 \\
                                            \end{array}
                                          \right)
$ where $C \in A_{n-2}$, $\alpha,\beta \in \big\{0,1\big\}^{n-2}$, $x \in \big\{0,1\big\}$ , and therefore
\begin{center}
$A^{-1}=\left(
                                                                                                      \begin{array}{cc}
                                                                                                        C^{-1} & -C^{-1}\left(
                                                                                                                      \begin{array}{cc}
                                                                                                                            \alpha &\beta \\
                                                                                                                          \end{array}
                                                                                                                        \right)\left(
                                                                                                                                 \begin{array}{cc}
                                                                                                                                   1 &-x \\
                                                                                                                                   0 &1 \\
                                                                                                                                 \end{array}
                                                                                                                               \right)\\
                                                                                                        0 & \left(
                                                                                                              \begin{array}{cc}
                                                                                                                1 & -x \\
                                                                                                                0 & 1 \\
                                                                                                              \end{array}
                                                                                                            \right)
                                                                                                         \\
                                                                                                      \end{array}
                                                                                                    \right)=\left(
                                                                                                      \begin{array}{cc}
                                                                                                        C^{-1} & -C^{-1}\left(
                                                                                                                      \begin{array}{cc}
                                                                                                                            \alpha &\beta-x\alpha \\
                                                                                                                          \end{array}
                                                                                                                        \right)\\
                                                                                                        0 & \left(
                                                                                                              \begin{array}{cc}
                                                                                                                1 & -x \\
                                                                                                                0 & 1 \\
                                                                                                              \end{array}
                                                                                                            \right)
                                                                                                         \\
                                                                                                      \end{array}
                                                                                                    \right)$
\end{center}

We will use the following notations:\\
\begin{center}
$e^TC^{-1}=\left(
                      \begin{array}{cccc}
                        c_1 & c_2 & \ldots & c_{n-2} \\
                      \end{array}
                    \right)$\\
\end{center}
\begin{center}
$\alpha=\left(
                      \begin{array}{cccc}
                        \alpha_1 & \alpha_2 & \ldots & \alpha_{n-2} \\
                      \end{array}
                    \right)^T$\\
\end{center}

\begin{center}
$\beta=\left(
                      \begin{array}{cccc}
                        \beta_1 & \beta_2 & \ldots & \beta_{n-2} \\
                      \end{array}
                    \right)^T.$
\end{center}

So
\begin{center}
$e^TA^{-1}=\left(
                      \begin{array}{cccccc}
                        c_1 & c_2 & \ldots & c_{n-2} & 1-\sum_{i=1}^{n-2} \alpha_ic_i & 1-x-\sum_{i=1}^{n-2}c_i(\beta_i-x\alpha_i)\\
                      \end{array}
                    \right).$
\end{center}

Consider the $n^{th}$ entry of $e^TA^{-1}$. Since $c_1=1$, $n \geq 4 $, and $ -1 \leq \beta_i-x\alpha_i \leq 1 $ for all $ 1 \leq i \leq n-2 $,  it is easy to see that
\begin{center}
  $-\sum_{i=1}^{n-2}|c_i| \leq 1-x-\sum_{i=1}^{n-2}c_i(\beta_i-x\alpha_i) \leq \sum_{i=1}^{n-2}|c_i|$
\end{center}
for all possible $x,\alpha_i,\beta_i \in \big\{0,1\big\}, 1 \leq i \leq n-2$. Since $\beta_i-\alpha_i \in \big\{-1,0,1\big\}$ , it is possible to achieve equality in each inequality by taking
\begin{equation}
x=1  \hspace{5 mm} and \hspace{5 mm} \textbf{sign}(\beta_i-\alpha_i)=\textbf{sign}(c_i) ,\hspace{5 mm} 1 \leq i \leq n-2
\end{equation}
or
\begin{equation}
x=1 \hspace{5 mm}  and\hspace{5 mm} \textbf{sign}(\beta_i-\alpha_i)=-\textbf{sign}(c_i) ,\hspace{5 mm} 1 \leq i \leq n-2
\end{equation}
respectively. Now, since $|-\sum_{i=1}^{n-2}|c_i||=|\sum_{i=1}^{n-2}|c_i||$ we get that if $A \in A_n$ is a matrix for which $e^TA^{-1}$ is an absolutely  dominant vector, its $n^{th}$ entry must be equal to either \begin{equation}
-\sum_{i=1}^{n-2}|c_i|
\end{equation}
or
\begin{equation}
\sum_{i=1}^{n-2}|c_i|.
\end{equation}
Note that the maximal (minimal) value of (3) ((4)) is obtained by taking $C$ such that $e^TC^{-1}$ is an absolutely dominant vector of the set $V=\big\{e^TA^{-1} | A \in A_{n-2} \big\}$ (and all the absolutely dominant vectors will give the same value). By the inductive hypothesis and using Lemma ~\ref{lemm:fiball}, the maximal value of (4) is $\sum_{i=1}^{n-2}|c_i|=1+\sum_{i=1}^{n-3}F_i=F_{n-1}$ (and this value may be achieved by choosing an appropriate $C$). Similarly, the minimal value of (3) is $-F_{n-1}$. Let us now consider the $(n-1)^{th}$ entry of $e^TA^{-1}$.
By the inductive hypothesis, the absolute value of the $(n-1)^{th}$ entry of $e^TA^{-1}$ is bounded from above by $F_{n-2}$. By taking $C \in A_{n-2}$ such that $e^TC^{-1}$ is an absolutely dominant vector, choosing $\alpha,\beta$ such that either (1) or (2) is satisfies and using Lemma ~\ref{lemm:fiball} and the inductive hypothesis, we get that the $(n-1)^{th}$ entry of $e^TA^{-1}$ is equal to either:
\begin{equation}
1-\sum_{i=1}^{n-2} \alpha_ic_i=1-\sum_{k=1}^{\lfloor{\frac{n-3}{2}\rfloor}} c_{2k+1}=1+\sum_{k=1}^{\lfloor{\frac{n-3}{2}\rfloor}} F_{2k}=F_{2\lfloor{\frac{n-3}{2}\rfloor}+1}
\end{equation}
or
\begin{equation}
1-\sum_{i=1}^{n-2} \alpha_ic_i=1-c_1-\sum_{k=1}^{\lfloor{\frac{n-2}{2}\rfloor}} c_{2k}=-\sum_{k=1}^{\lfloor{\frac{n-2}{2}\rfloor}} F_{2k-1}=-F_{2\lfloor{\frac{n-2}{2}\rfloor}}
\end{equation}
respectively.\\
Note that if $n$ is odd then expression (5) is equal to $F_{n-2}$, and if $n$ is even then expression (6) is equal to $-F_{n-2}$. In sum, using the inductive hypothesis, we showed that the largest possible absolute value of the $n^{th}$ entry of $e^TA^{-1}$ (such that $A \in A_n $) is $F_{n-1}$. In this case, we showed that it is possible to choose $\alpha$ such that the absolute value of the $(n-1)^{th}$ entry of $e^TA^{-1}$ is $F_{n-2}$. This value is the largest possible absolute value due to the inductive hypothesis. Therefore, we showed that the vector $v$, defined in the lemma, is an absolutely dominant vector for $V=\big\{e^TA^{-1} | A \in A_n \big\}$, and the proof is complete.
\end{proof}
We are now ready to prove (a) and (b).
We represent $A \in A_n$ in the same form as in Lemma ~\ref{lemm:dominantvector}.\\
\begin{eqnarray}
    e^TA^{-1}e \nonumber
    & =  e^T\left(
                                                                                                      \begin{array}{cc}
                                                                                                        C^{-1} & -C^{-1}\left(
                                                                                                                      \begin{array}{cc}
                                                                                                                            \alpha &\beta-x\alpha \\
                                                                                                                          \end{array}
                                                                                                                        \right)\\
                                                                                                        0 & \left(
                                                                                                              \begin{array}{cc}
                                                                                                                1 & -x \\
                                                                                                                0 & 1 \\
                                                                                                              \end{array}
                                                                                                            \right)
                                                                                                         \\
                                                                                                      \end{array}
                                                                                                    \right)e \nonumber \\
    & = 2-x+e^TC^{-1}e-e^TC^{-1}\big(\beta+(1-x)\alpha \big) \nonumber \\
    & = 2-x+e^TC^{-1}\Big(e-\alpha-\beta+x\alpha\Big) \nonumber \\
     \nonumber
\end{eqnarray}
Denote $u=\Big(e-\alpha-\beta+x\alpha\Big)$. Note that if $x=1$ then $u \in \big\{0,1\big\}^{n-2}$, and if $x=0$ then $u \in \big\{-1,0,1\big\}^{n-2}$. In addition, note that
\begin{equation}
max \big\{ 2-x+e^TC^{-1}u| x=0, \alpha,\beta \in \big\{0,1\big\}^{n-2}\big\} \geq max \big\{ 2-x+e^TC^{-1}u| x=1, \alpha,\beta \in \big\{0,1\big\}^{n-2}\big\}
\end{equation}
Now, since $C \in A_{n-2}$, the first entry of $e^TC^{-1}$ is 1. If $x=0$, then in order to minimize the value of $e^TC^{-1}u$  we have to take the first entries of $\alpha$ and $\beta$ to be one. On the other hand, if $x=1$, then in order to minimize the value of $e^TC^{-1}u$  we have to take the first entries of $\beta$ to be one. The difference between these two cases is 1, and therefore
\begin{equation}
min \big\{ 2-x+e^TC^{-1}u| x=0, \alpha,\beta \in \big\{0,1\big\}^{n-2}\big\} \leq min \big\{ 2-x+e^TC^{-1}u| x=1, \alpha,\beta \in \big\{0,1\big\}^{n-2}\big\}
\end{equation}
Since we are only interested in the minimal and the maximal values of $e^TA^{-1}e$ we may assume ,by (7) and (8), that $x=0$. Therefore, $e^TA^{-1}e=2+e^TC^{-1}\Big(e-\alpha-\beta\Big)$. Using the notation of Lemma ~\ref{lemm:dominantvector} we get:\\
                     \begin{equation}
                     min \big\{ 2+e^TC^{-1}\Big(e-\alpha-\beta\Big)|\alpha,\beta \in \big\{0,1\big\}^{n-2}\big\}=2-\sum_{i=1}^{n-2} |c_i|
                     \end{equation}
and
\begin{equation}
max \big\{ 2+e^TC^{-1}\Big(e-\alpha-\beta\Big)|\alpha,\beta \in \big\{0,1\big\}^{n-2}\big\}=2+\sum_{i=1}^{n-2} |c_i|
\end{equation}
Therefore, the minimal and the maximal value of $e^TA^{-1}e$ is achieved by taking $C$ such that $e^TC^{-1}$ is an absolutely dominant vector of $\{e^TA^{-1} | A \in A_{n-2} \}$. Hence, by Lemmas ~\ref{lemm:dominantvector} and ~\ref{lemm:fiball},\\
$\max_{A \in A_n} S(A^{-1})=\max\big\{ 2+e^TC^{-1}\Big(e-\alpha-\beta\Big)|\alpha,\beta \in \big\{0,1\big\}^{n-2}, C \in A_{n-2}\big\}=3+\sum_{i=1}^{n-3} F_i=2+F_{n-1},$\\
and similarly, $\min_{A \in A_n} S(A^{-1})=1-\sum_{i=1}^{n-3} F_i=2-F_{n-1}$.\\
\\
It is well known that every natural number is the sum of distinct Fibonacci numbers. For the proof of (c) we need a slightly stronger observation.
\begin{lemma}\label{lemm:lastlemma}
Let $M$ be a natural number. Let $n$ be an integer for which $F_{n-1} \leq M <F_{n}$. Then $M$ can be represented as a sum of distinct Fibonacci elements from the set $\big\{F_1,F_2, \ldots, F_{n-2}\big\}$.
\end{lemma}
\begin{proof}
The proof is by induction. For $M=1$ the statement is true. Now assume that it is true for all the numbers which are smaller than $M$. We will show that it is true for $M$. Let $n$ be an integer for which $F_{n-1} \leq M <F_{n}$. Since $M <F_{n}$ we get that $M <F_{n-2}+F_{n-1}$, and hence $M-F_{n-2}<F_{n-1}$. Therefore, there exists $n-1 \geq k >0$ such that $F_{k-1} \leq M-F_{n-2}<F_k$, and hence by the inductive hypothesis, $M-F_{n-2}$ can be represented as a sum of distinct Fibonacci elements from the set $\big\{F_1,F_2, \ldots, F_{k-2}\big\}$. Since $n-1 \geq k$, we have $n-3 \geq k-2$, and so $M$ can be represented as a sum of distinct Fibonacci elements from the set $\big\{F_1,F_2, \ldots, F_{n-2}\big\}$.
\end{proof}
We conclude the proof of Theorem ~\ref{thm:mainresult} by proving (c).
Let $S=2+T$; $-F_{n-1} \leq T \leq F_{n-1}$. The cases $T=F_{n-1}$ and $T=-F_{n-1}$ were proved in (a) and (b). For $T=0$, let $A$ be a triangular Toeplitz matrix with first row $\left(
                                            \begin{array}{ccccccc}
                                              1 & 0 & 1 & 0 & 0 & \ldots & 0 \\
                                            \end{array}
                                          \right).$ Then $S(A^{-1})=2$. Similarly, it is easy to prove the claim for any $S$ between 1 and $n$. For the other numbers in $[2-F_{n-1},2+F_{n-1}]$ (and also for $1,2,\ldots,n$), let us consider the expression in (10). It is easy to see that in fact by choosing appropriate $\alpha$ and $\beta$ (and $C$ such that $e^TC^{-1}$ is an absolutely dominant vector), $e^TC^{-1}\Big(e-\alpha-\beta\Big)$ can achieve any value of the form $\alpha_1+\sum_{i=2}^{n-2} \alpha_iF_{i-1}$ where $\alpha_i \in \big\{0,1\big\}$ for all $1 \leq i \leq n-2$. Note that by Lemma ~\ref{lemm:lastlemma}, there exists appropriate set $\big\{\alpha_i\big\}_{i=1}^{n-2}$ such that $T=\sum_{i=2}^{n-2} \alpha_iF_{i-1}$ (we may choose $\alpha_1=0$). Hence, for this choice of $C$, $\alpha$ and $\beta$ we get $A$ such that $S=T+2=e^TA^{-1}e$. We may obtain similar result for the case $S=2-T$  where $0 \leq T \leq F_{n-1}$ by looking at expression (9), and this completes the proof.
\end{proof}
As an analogy to the result on rational numbers of \cite{Bit-Shun} mentioned in the introduction, we now have
\begin{corollary}
A number $S$ is equal to $S(A^{-1})$ for a $(0,1)-$ triangular matrix $A$ iff $S$ is an integer.
\end{corollary}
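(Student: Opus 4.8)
The plan is to deduce the corollary directly from Theorem~\ref{thm:mainresult}, handling the two implications separately and reducing the general triangular case to the upper triangular family $A_n$ on which the theorem is phrased.

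For the ``only if'' direction I would argue that $S(A^{-1})$ is forced to be an integer for any invertible $(0,1)$ triangular matrix $A$. First observe that all diagonal entries of $A$ must equal $1$: the determinant of a triangular matrix is the product of its diagonal entries, and since these lie in $\{0,1\}$, invertibility forces each of them to be $1$, so $\det(A)=1$. Consequently $A^{-1}=\frac{adj(A)}{\det(A)}=adj(A)$ has integer entries, and hence $S(A^{-1})=e^TA^{-1}e$ is a sum of integers, i.e. an integer. This is exactly the observation already recorded at the start of the proof of Theorem~\ref{thm:mainresult}. Moreover it suffices to treat upper triangular matrices: transposition maps lower triangular matrices to upper triangular ones, and since $S\big((A^T)^{-1}\big)=S\big((A^{-1})^T\big)=S(A^{-1})$, the two classes realize exactly the same set of values $S(A^{-1})$.

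For the ``if'' direction, let $S$ be an arbitrary integer. The key point is that the Fibonacci numbers are unbounded, so $F_{n-1}\to\infty$, and therefore the intervals $[2-F_{n-1},\,2+F_{n-1}]$ exhaust $\mathbb{Z}$ as $n$ grows. Concretely I would choose $n\geq 3$ large enough that $2-F_{n-1}\leq S\leq 2+F_{n-1}$, and then invoke Theorem~\ref{thm:mainresult} (in particular part (c)) to produce an $n\times n$ triangular $(0,1)$ matrix $A$ with $S(A^{-1})=S$, which finishes this direction.

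Given Theorem~\ref{thm:mainresult}, the argument is essentially immediate, so I do not expect a substantial obstacle. The only points requiring a moment's care are the harmless reduction from general triangular matrices to the upper triangular family $A_n$ via transposition, and the observation that $n$ must be allowed to depend on $S$ (rather than fixed) in order to cover every integer.
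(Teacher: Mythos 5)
Your proposal is correct and matches the paper's (implicit) argument exactly: the paper presents this corollary as an immediate consequence of Theorem~\ref{thm:mainresult}, relying on precisely the two observations you make --- integrality via $\det(A)=1$ and $A^{-1}=adj(A)$, and the fact that the intervals $[2-F_{n-1},\,2+F_{n-1}]$ exhaust the integers as $n\to\infty$. Your explicit handling of the lower-triangular case via transposition is a careful touch the paper leaves unstated.
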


Define $G_n$ to be the set of $n \times n$ matrices of the form $I+B$ where $B$ is an $n \times n$ upper triangular nilpotent matrix with entries from the interval $[0,1]$. Then, using the fact that for an invertible matrix $A$,  $A^{-1}=\frac{adj(A)}{\det(A)}$, and that for $A \in G_n$, $\det(A)=1$, we have $A^{-1}=adj(A)$ for $A \in G_n$. Thus, since $S(A^{-1})$ is linear in each one of the entries in such matrix $A$, we conclude the following:
\begin{corollary}

$\max_{A \in G_n} S(A^{-1})=2+F_{n-1}$,
$\min_{A \in G_n} S(A^{-1})=2-F_{n-1}$.
\end{corollary}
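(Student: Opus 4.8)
The plan is to exploit the multiaffine structure of $S(A^{-1})$ over the box $G_n$ and thereby reduce the continuous optimization to the discrete one already settled in Theorem~\ref{thm:mainresult}. Writing $A=I+B$ with $B$ strictly upper triangular and nilpotent, the strictly-upper entries $b_{kl}$ ($k<l$) are free parameters ranging over $[0,1]$, so in these coordinates $G_n$ is literally the cube $[0,1]^{\binom{n}{2}}$ while the diagonal stays frozen at $1$. Since $\det(A)=1$ we have $A^{-1}=\mathrm{adj}(A)$, and every entry of $\mathrm{adj}(A)$ is a signed minor of $A$, i.e.\ a determinant; because a determinant is a degree-one (affine) polynomial in each individual matrix entry when the others are held fixed, each entry of $A^{-1}$---and hence $S(A^{-1})=e^TA^{-1}e$---is affine in each coordinate $b_{kl}$ separately. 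This is exactly the linearity already noted in the text just before the corollary.

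First I would record the elementary optimization principle that makes everything work: a function $f$ that is affine in each coordinate of $[0,1]^m$ attains both its maximum and its minimum at a vertex of the cube. This follows by optimizing one coordinate at a time, since an affine function on an interval is extremized at an endpoint, so moving each $b_{kl}$ to the more favorable of the two values $0,1$ drives $f$ to a vertex value that is at least as large (for the maximum) or at least as small (for the minimum). Applying this to $f(B)=S\big((I+B)^{-1}\big)$ shows that $\max_{A\in G_n}S(A^{-1})$ and $\min_{A\in G_n}S(A^{-1})$ are both attained at some $B$ whose strictly-upper entries all lie in $\{0,1\}$.

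The final step is to identify these vertex matrices with the class $A_n$. An upper triangular matrix is invertible precisely when all of its diagonal entries are nonzero, so every invertible $(0,1)$-upper triangular matrix has all diagonal entries equal to $1$; conversely each vertex $I+B$ of $G_n$ is such a matrix. Hence the vertices of $G_n$ are exactly the elements of $A_n$, and combining this with the previous paragraph yields $\max_{A\in G_n}S(A^{-1})=\max_{A\in A_n}S(A^{-1})$ and likewise for the minima. Invoking parts \textbf{(a)} and \textbf{(b)} of Theorem~\ref{thm:mainresult} then gives the two claimed values $2+F_{n-1}$ and $2-F_{n-1}$.

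The one point needing genuine care is the multiaffinity claim together with the vertex principle built on it. I would want to be sure that $S(A^{-1})$ really is affine (not merely polynomial) in each $b_{kl}$, which is precisely where $\det A\equiv 1$ is essential: it lets me replace the rational function $A^{-1}$ by the polynomial $\mathrm{adj}(A)$, whose entries are minors and hence affine in each entry of $A$. Once this is in hand, the coordinatewise argument applies simultaneously to the maximum and the minimum and the remainder is bookkeeping; the whole conceptual content is that the extrema of a multiaffine functional over a cube sit at the cube's vertices, which here are exactly the discrete matrices of $A_n$.
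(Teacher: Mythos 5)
Your proof is correct and follows essentially the same route as the paper: since $\det(A)=1$ on $G_n$, one has $A^{-1}=\mathrm{adj}(A)$, so $S(A^{-1})$ is affine in each strictly-upper entry, the extrema over the cube are attained at its vertices, and these vertices are exactly the matrices of $A_n$, reducing to parts (a) and (b) of Theorem~\ref{thm:mainresult}. You merely spell out the multiaffine vertex principle that the paper leaves implicit.
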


\begin{remark} \rm
For general $n \times n$ invertible $(0,1)-$ matrix $A$ (which is not not necessarily triangular), the question regarding the minimal or the maximal value that $S(A^{-1})$ may obtain is still open. For $n=3,4,5,6$, the extremal values are exactly the same as in the triangular case. However, for $n=7$, there exist an $n \times n$ invertible $(0,1)-$ matrices $M$ and $N$ (which are presented below)  such that $S(M^{-1})=-7$ and $S(N^{-1})=11$, whereas in the triangular case, the minimal and the maximal values are -6 and 10 respectively.\\
\\
$ M=\left(
  \begin{array}{ccccccc}
    1 & 0 & 1 & 0 & 1 & 0 & 0 \\
    0 & 1 & 1 & 0 & 1 & 0 & 0 \\
    0 & 0 & 1 & 1 & 1 & 1 & 1 \\
    0 & 0 & 0 & 1 & 1 & 0 & 0 \\
    0 & 0 & 0 & 0 & 1 & 1 & 1 \\
    0 & 0 & 1 & 0 & 0 & 1 & 0 \\
    0 & 0 & 1 & 0 & 0 & 0 & 1 \\
  \end{array}
\right) $, $ N= \left(
                  \begin{array}{ccccccc}
                    1 & 0 & 1 & 0 & 1 & 1 & 1 \\
                    0 & 1 & 1 & 0 & 1 & 1 & 1 \\
                    0 & 0 & 1 & 1 & 0 & 0 & 1 \\
                    0 & 0 & 0 & 1 & 1 & 1 & 1 \\
                    0 & 0 & 0 & 0 & 1 & 0 & 0 \\
                    0 & 0 & 0 & 0 & 0 & 1 & 0 \\
                    1 & 1 & 0 & 0 & 0 & 0 & 1 \\
                  \end{array}
                \right)$\\
\\
For larger values of $n$, the difference between the general and the triangular case gets bigger.
\end{remark}
\section{\textbf{Extremal matrices}}
Recall that an invertible triangular $n \times n$ $(0,1)-$matrix $A$ is extremal if $e^TA^{-1}e=2 \pm F_{n-1}$. $I_3$ and $I_4$ are maximizing matrices. The matrices $\left(
                                          \begin{array}{ccc}
                                            1 & 1 & 1 \\
                                            0 & 1 & 0 \\
                                            0 & 0 & 1 \\
                                          \end{array}
                                        \right)$ and $\left(
                                                     \begin{array}{cccc}
                                                       1 & 0 & 1 & 1 \\
                                                       0 & 1 & 1 & 1 \\
                                                       0 & 0 & 1 & 0 \\
                                                       0 & 0 & 0 & 1 \\
                                                     \end{array}
                                                   \right)
                                        $ are minimizing matrices.\\
Following the proof of the main theorem we can construct extremal matrices for $n \geq 5$, that have a beautiful Fibonacci pattern of their inverses. For $l=2,3$, partition the off-diagonal entries of an upper triangular $n \times n$ matrix into $n-l$ sets, $S_0,S_1,\ldots, S_{n-l-1}$. $S_{n-l-1}$ consists of the entries in the first 2 rows of the last $l$ columns. For $i=1,2,\ldots,n-l-2$, $S_i$ consists of the entries immediately to the left or immediately below the entries in $S_{i+1}$. $S_0$ consists of all the remaining entries which are above the main diagonal (2 if $l=2$ and 4 if $l=3$). For example, in the case that $n=9$, Figure ~\ref{f:subfig-1} present the partition in the case $l=2$, and Figure ~\ref{f:subfig-2} present the partition in the case $l=3$.

\begin{figure}[H]
\begin{center}
\mbox{
	\leavevmode
	\subfigure []
	{ \label{f:subfig-1}
	  \includegraphics[width=4cm]{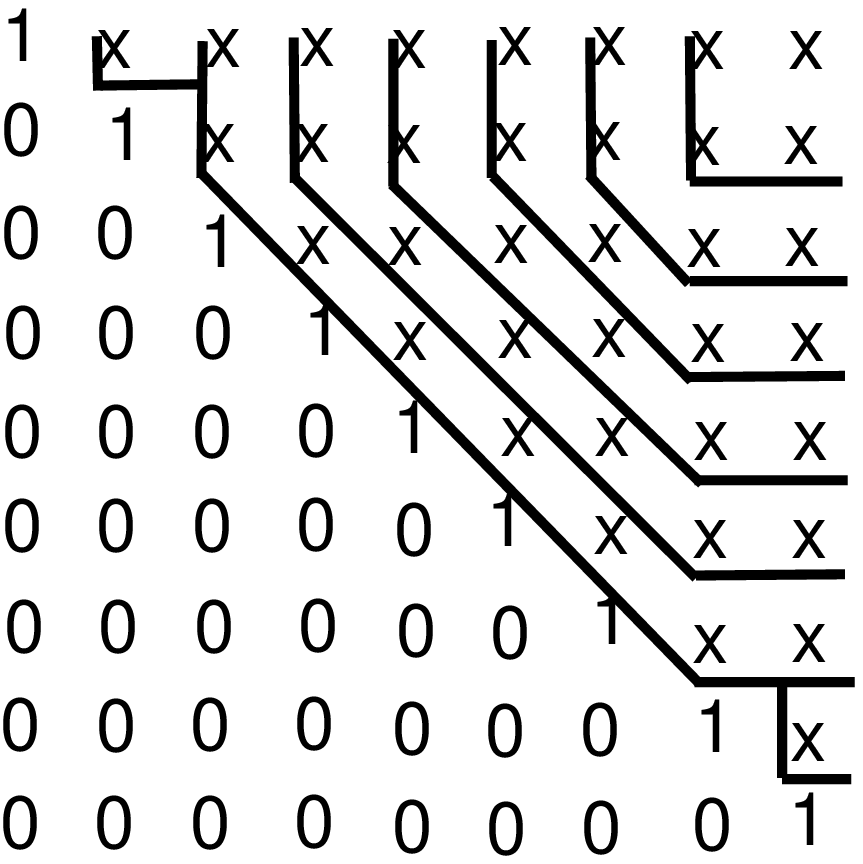} }

	\leavevmode
	\subfigure []
	{ \label{f:subfig-2}
	  \includegraphics[width=4cm]{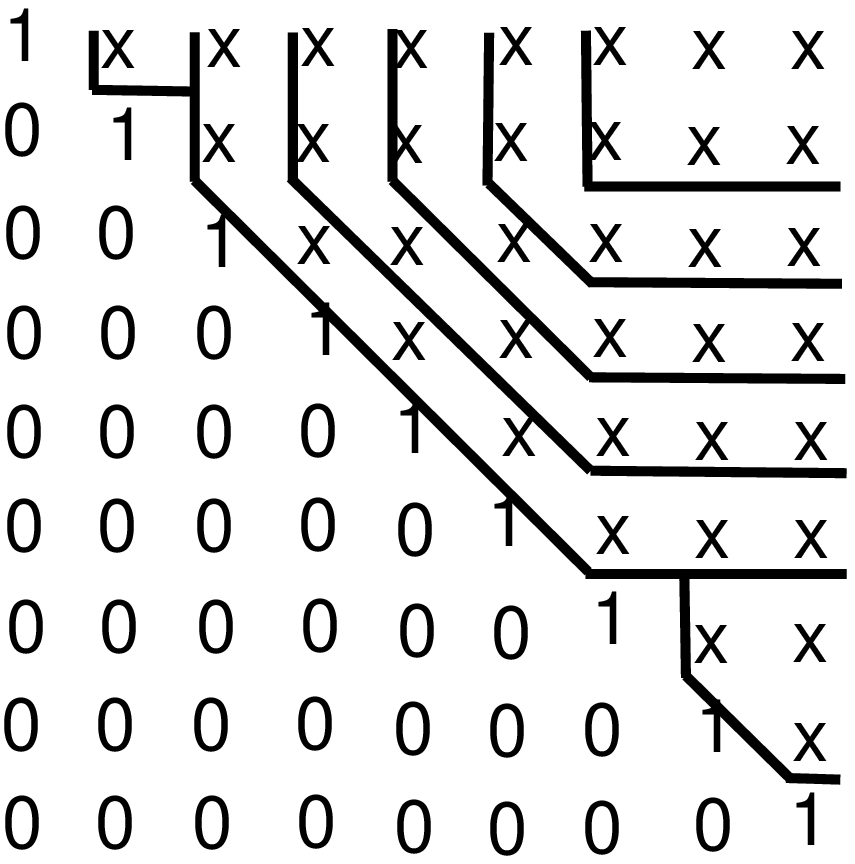} }
}	
\end{center}
\end{figure}

Let $A$ be an invertible $(0,1)$-upper triangular matrix, where the entries in $S_i$ are $i \textrm{ mod } 2$. It follows from the proof of Theorem ~\ref{thm:mainresult} that $A^{-1}$ is an $n \times n$ upper triangular matrix where the diagonal entries are 1, the entries in $S_0$ are 0, and the entries in $S_i$ for $i \geq 1$ are $(-1)^iF_i$. For example when $n=9$, $l=2$;\\
\\
$A=\left(
   \begin{array}{ccccccccc}
     1 & 0 & 1 & 0 & 1 & 0 & 1 & 0 & 0 \\
     0 & 1 & 1 & 0 & 1 & 0 & 1 & 0 & 0 \\
     0 & 0 & 1 & 1 & 0 & 1 & 0 & 1 & 1 \\
     0 & 0 & 0 & 1 & 1 & 0 & 1 & 0 & 0 \\
     0 & 0 & 0 & 0 & 1 & 1 & 0 & 1 & 1 \\
     0 & 0 & 0 & 0 & 0 & 1 & 1 & 0 & 0 \\
     0 & 0 & 0 & 0 & 0 & 0 & 1 & 1 & 1 \\
     0 & 0 & 0 & 0 & 0 & 0 & 0 & 1 & 0 \\
     0 & 0 & 0 & 0 & 0 & 0 & 0 & 0 & 1 \\
   \end{array}
 \right), A^{-1}=\left(
            \begin{array}{ccccccccc}
              1 & 0 & -1& 1 & -2 & 3 & -5 & 8 & 8 \\
              0 & 1 & -1& 1 & -2 & 3 & -5 & 8 & 8 \\
              0 & 0 & 1 & -1 & 1 & -2 & 3 & -5 & -5 \\
              0 & 0 & 0 & 1 & -1 & 1 & -2 & 3 & 3 \\
              0 & 0 & 0 & 0 & 1 & -1 & 1 & -2 & -2 \\
              0 & 0 & 0 & 0 & 0 & 1 & -1 & 1 & 1 \\
              0 & 0 & 0 & 0 & 0 & 0 & 1 & -1 & -1 \\
              0 & 0 & 0 & 0 & 0 & 0 & 0 & 1 & 0 \\
              0 & 0 & 0 & 0 & 0 & 0 & 0 & 0 & 1 \\
            \end{array}
          \right)$\\
\\
and when $n=9$, $l=3$;\\
\\
$A=\left(
  \begin{array}{ccccccccc}
    1 & 0 & 1 & 0 & 1 & 0 & 1 & 1 & 1 \\
    0 & 1 & 1 & 0 & 1 & 0 & 1 & 1 & 1 \\
    0 & 0 & 1 & 1 & 0 & 1 & 0 & 0 & 0 \\
    0 & 0 & 0 & 1 & 1 & 0 & 1 & 1 & 1 \\
    0 & 0 & 0 & 0 & 1 & 1 & 0 & 0 & 0 \\
    0 & 0 & 0 & 0 & 0 & 1 & 1 & 1 & 1 \\
    0 & 0 & 0 & 0 & 0 & 0 & 1 & 0 & 0 \\
    0 & 0 & 0 & 0 & 0 & 0 & 0 & 1 & 0 \\
    0 & 0 & 0 & 0 & 0 & 0 & 0 & 0 & 1 \\
  \end{array}
\right), A^{-1}=\left(
                  \begin{array}{ccccccccc}
                    1 & 0 & -1 & 1 & -2 & 3 & -5 & -5 & -5 \\
                    0 & 1 & -1 & 1 & -2 & 3 & -5 & -5 & -5 \\
                    0 & 0 & 1 & -1 & 1 & -2 & 3 & 3 & 3 \\
                    0 & 0 & 0 & 1 & -1 & 1 & -2 & -2 & -2 \\
                    0 & 0 & 0 & 0 & 1 & -1 & 1 & 1 & 1 \\
                    0 & 0 & 0 & 0 & 0 & 1 & -1 & -1 & -1 \\
                    0 & 0 & 0 & 0 & 0 & 0 & 1 & 0 & 0 \\
                    0 & 0 & 0 & 0 & 0 & 0 & 0 & 1 & 0 \\
                    0 & 0 & 0 & 0 & 0 & 0 & 0 & 0 & 1 \\
                  \end{array}
                \right)\\
$
\\
\\
In general, if $n+l$ is even, $e^TA^{-1}e=2-F_{n-1}$, and hence $A$ is a minimizing extremal matrix (this also includes the case $n=4$). If $n+l$ is odd, $e^TA^{-1}e=2+F_{n-1}$, and hence $A$ is a maximizing extremal matrix. Using these equalities, we obtain the following Fibonacci identities:
\begin{corollary}
$\sum_{i=1}^{n-4} (n-i)(-1)^iF_i+4(-1)^{n-3}F_{n-3}=(-1)^{n-1}F_{n-1}-(n-2)$
\end{corollary}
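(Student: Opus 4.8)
The plan is to read off this identity from the explicit extremal matrix $A$ of this section in the case $l=2$, by computing $S(A^{-1})=e^TA^{-1}e$ in two different ways. On the one hand, the construction tells us the exact form of $A^{-1}$: its diagonal entries are $1$, the entries in $S_0$ are $0$, and every entry in $S_i$ equals $(-1)^iF_i$ for $i \geq 1$. On the other hand, for $l=2$ we have $e^TA^{-1}e=2-F_{n-1}$ when $n$ is even and $e^TA^{-1}e=2+F_{n-1}$ when $n$ is odd (since $n+l$ is even exactly when $n$ is even), which we write uniformly as $e^TA^{-1}e=2+(-1)^{n-1}F_{n-1}$. Equating the two expressions and solving for the Fibonacci sum will give the stated identity.

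To carry out the first computation I would sum $A^{-1}$ band by band. The diagonal contributes $n$, and $S_0$ contributes $0$. The remaining contribution is $\sum_i |S_i|\,(-1)^iF_i$, so the only nontrivial input is the cardinality of each band $S_i$. I would establish, by tracing the staircase rule that produces $S_i$ from $S_{i+1}$ (appending the cells immediately to the left of and immediately below the previous band), that $|S_i|=n-i$ for $1 \leq i \leq n-4$, while the top-right corner block $S_{n-3}$ is the $2\times 2$ block in the first two rows of the last two columns and hence has $|S_{n-3}|=4$. A convenient sanity check is that these cardinalities sum to $2+\sum_{i=1}^{n-4}(n-i)+4=\binom{n}{2}$, exactly the number of strictly upper-triangular entries.

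Combining the band sizes with the known entry values yields
\[
e^TA^{-1}e = n + \sum_{i=1}^{n-4}(n-i)(-1)^iF_i + 4(-1)^{n-3}F_{n-3}.
\]
Setting this equal to $2+(-1)^{n-1}F_{n-1}$ and moving $n-2$ to the right-hand side produces exactly
\[
\sum_{i=1}^{n-4}(n-i)(-1)^iF_i + 4(-1)^{n-3}F_{n-3} = (-1)^{n-1}F_{n-1}-(n-2),
\]
as claimed.

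The main obstacle is the band-counting step: verifying $|S_i|=n-i$ in the generic range and, crucially, isolating the corner block $S_{n-3}$ as the one band whose size ($4$) breaks this pattern. This discrepancy is precisely why the term $4(-1)^{n-3}F_{n-3}$ appears separately in the statement rather than being absorbed into the sum. Everything after the counting is a one-line substitution using the already-established value of $e^TA^{-1}e$ for the $l=2$ construction.
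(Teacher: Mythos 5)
Your proposal is correct and matches the paper's (largely implicit) argument exactly: the paper derives this corollary by equating the band-by-band sum of the entries of $A^{-1}$ for the $l=2$ construction --- diagonal contributing $n$, $S_0$ contributing $0$, each band $S_i$ ($1 \leq i \leq n-4$) contributing $(n-i)(-1)^iF_i$, and the $2\times 2$ corner block $S_{n-3}$ contributing $4(-1)^{n-3}F_{n-3}$ --- with the extremal value $2+(-1)^{n-1}F_{n-1}$. Your band count $|S_i|=n-i$, the isolation of the corner block of size $4$, and the sanity check against $\binom{n}{2}$ are all correct (e.g.\ they check out against the paper's explicit $n=9$ example, where the sum is $9-18+32=23=2+F_8$).
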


\begin{corollary}
$\sum_{i=1}^{n-5} (n-i)(-1)^{i}F_i+6(-1)^{n-4}F_{n-4}=(-1)^nF_{n-1}-(n-2)$
\end{corollary}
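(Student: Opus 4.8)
The plan is to evaluate $S(A^{-1})=e^TA^{-1}e$ for the extremal matrix $A$ coming from the $l=3$ construction in two different ways and equate the answers. On the one hand, the construction guarantees that $A$ is extremal: since $n+l=n+3$ is even exactly when $n$ is odd, the matrix is minimizing for odd $n$ (value $2-F_{n-1}$) and maximizing for even $n$ (value $2+F_{n-1}$), so the two cases combine into the single closed form $S(A^{-1})=2+(-1)^nF_{n-1}$. On the other hand, I would add up the entries of $A^{-1}$ directly, using the explicit description furnished by the construction: the diagonal entries equal $1$, the entries of $S_0$ equal $0$, and the entries of $S_i$ for $i\ge 1$ equal $(-1)^iF_i$.

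Since the diagonal contributes $n$ and $S_0$ contributes $0$, the direct sum is
\[
S(A^{-1})=n+\sum_{i=1}^{n-4}|S_i|\,(-1)^iF_i,
\]
so everything reduces to counting the number $|S_i|$ of entries in each block. The top block $S_{n-4}$ is the $2\times 3$ rectangle formed by the first two rows of the last three columns, whence $|S_{n-4}|=6$ and it produces the isolated term $6(-1)^{n-4}F_{n-4}$. For $1\le i\le n-5$ I would establish $|S_i|=n-i$ by tracking the ``left or below'' staircase rule: each such $S_i$ is an anti-diagonal hook made of a length-$2$ vertical segment at the top, a length-$3$ horizontal segment at the bottom, and a chain of single-entry diagonal steps in between, and passing from $S_{i+1}$ to $S_i$ lengthens that chain by exactly one step. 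Substituting these counts gives
\[
S(A^{-1})=n+\sum_{i=1}^{n-5}(n-i)(-1)^iF_i+6(-1)^{n-4}F_{n-4}.
\]

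Equating the two evaluations of $S(A^{-1})$ and transposing the term $n$ yields precisely the asserted identity $\sum_{i=1}^{n-5}(n-i)(-1)^iF_i+6(-1)^{n-4}F_{n-4}=(-1)^nF_{n-1}-(n-2)$. I expect the main obstacle to be the bookkeeping of the cardinalities $|S_i|$: one must verify that the staircase grows by exactly one entry at each step, with no double counting where the ``left'' and ``below'' neighbourhoods overlap at a corner, and must treat the rectangular top block $S_{n-4}$ of size $6$ separately from the hooks. A secondary subtlety is the parity argument collapsing the maximizing ($n$ even) and minimizing ($n$ odd) cases into the single expression $2+(-1)^nF_{n-1}$, which has to be carried out consistently. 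As a consistency check, $\sum_{i=1}^{n-5}(n-i)+|S_0|+|S_{n-4}|=\binom{n}{2}$ recovers the total number of super-diagonal entries, and a direct verification at $n=9$ (where $A^{-1}$ is displayed and $S(A^{-1})=-19=2-F_8$) confirms both the block counts and the final identity.
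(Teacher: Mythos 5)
Your proof is correct and is essentially the argument the paper intends: it equates the extremal value $2+(-1)^nF_{n-1}$ of the $l=3$ construction with the direct entrywise sum of $A^{-1}$, and your block cardinalities ($|S_{n-4}|=6$, $|S_i|=n-i$ for $1\le i\le n-5$, diagonal contributing $n$, $S_0$ contributing $0$) are exactly right, as your $n=9$ check confirms. The paper leaves this computation implicit (``Using these equalities, we obtain the following Fibonacci identities''), so your write-up merely supplies the bookkeeping it omits.
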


\section{\textbf{Determinants of (1,2)-matrices}}
In \cite{Bit-Shun} , the following remark, which follows from Cramer’s rule and the multilinearity of the determinant was presented:
\begin{remark}\label{thm:rem}
For any nonsingular matrix $A$,
\begin{center}
    $S(A^{-1})=\frac{det(A+J)-det(A)}{det(A)}$
\end{center}
where $J$ is the matrix whose all entries are 1.
\end{remark}
Recall that it was proved in \cite{Ching_Li} that the maximal determinant of an $n\times n$ Hessenberg (0,1)-matrix is $F_n$. Using our main result and Remark ~\ref{thm:rem}, we obtain another family of matrices whose determinants are strongly related to the Fibonacci sequence.\\
Let $W_n$ be the family of $n \times n$ matrices such that for any $A \in W_n$,
\begin{center}
$A_{ij} =\left\{
    \begin{array}{cl}
      1 & \textrm {if } j>i  \\
      2 & \textrm {if } j=i \\
      1 \textrm { or } 2 & \textrm {if } j<i.

   \end{array} \right.
$ \\
\end{center}

From Remark ~\ref{thm:rem} and Theorem ~\ref{thm:mainresult}, we obtain the following corollary:
\begin{corollary}
Let $n \geq 3$. Then $S=det(A)$, for some $A \in W_n$ iff $S$ is an integer that satisfies
$3-F_{n-1} \leq S \leq 3+F_{n-1}$.
\end{corollary}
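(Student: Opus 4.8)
The plan is to reduce the corollary to Theorem~\ref{thm:mainresult} by a single affine substitution that converts the $(1,2)$-structure of $W_n$ into the $(0,1)$-triangular structure governed by the main theorem. Given $A \in W_n$, I would set $B = A - J$, where $J$ is the all-ones matrix. Reading off the three cases in the definition of $W_n$, the strictly-upper entries of $B$ become $1-1=0$, the diagonal entries become $2-1=1$, and the strictly-lower entries become $\{1,2\}-1=\{0,1\}$. Hence $B$ is a lower triangular $(0,1)$-matrix with unit diagonal, so $\det(B)=1$ and $B$ is invertible. Moreover the map $A \mapsto A-J$ is a bijection from $W_n$ onto the set of all lower triangular $(0,1)$-matrices with unit diagonal, since adding $J$ back recovers exactly the $W_n$ pattern.

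The next step is to apply Remark~\ref{thm:rem} to $B$ (not to $A$). Since $B+J=A$ and $\det(B)=1$, the remark telescopes to
\[
S(B^{-1})=\frac{\det(B+J)-\det(B)}{\det(B)}=\det(A)-1,
\]
so that $\det(A)=S(B^{-1})+1$ for every $A \in W_n$. This identity needs only the nonsingularity of $B$, which is guaranteed by its unit diagonal, and not that of $A$; in particular it remains valid even when $A$ happens to be singular.

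It then remains to identify the range of $S(B^{-1})$. Because the sum of all entries of a matrix is invariant under transposition and $(B^{-1})^T=(B^T)^{-1}$, we have $S(B^{-1})=S((B^T)^{-1})$, and $B^T$ is an upper triangular $(0,1)$-matrix with unit diagonal, i.e. $B^T \in A_n$. As $A$ ranges over $W_n$, $B=A-J$ ranges over all lower triangular $(0,1)$ unit-diagonal matrices, so $B^T$ ranges over all of $A_n$. By Theorem~\ref{thm:mainresult}, $S((B^T)^{-1})$ takes precisely the integer values in $[2-F_{n-1},\,2+F_{n-1}]$; adding $1$, the quantity $\det(A)=S(B^{-1})+1$ takes precisely the integer values in $[3-F_{n-1},\,3+F_{n-1}]$, proving both directions of the equivalence. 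For the ``if'' direction one uses the bijection in reverse: given a target integer $S$ in the interval, Theorem~\ref{thm:mainresult} supplies $\tilde A \in A_n$ with $S(\tilde A^{-1})=S-1$, and then $A=\tilde A^T+J \in W_n$ realizes $\det(A)=S$.

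I do not expect a genuine obstacle here, since all the hard content is already packaged in the main theorem. The one point requiring care is the upper-versus-lower triangular mismatch: the substitution $A\mapsto A-J$ naturally produces a lower triangular $B$, whereas Theorem~\ref{thm:mainresult} is phrased for the upper triangular family $A_n$, and this gap is bridged cleanly by the transposition-invariance of the entry sum $S$. A secondary check is to confirm that $A\mapsto A-J$ is genuinely a bijection onto all of $A_n$ (after transposition), so that the full integer interval is attained rather than merely a subinterval.
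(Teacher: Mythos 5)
Your proposal is correct and follows essentially the same route the paper intends: apply Remark~\ref{thm:rem} to the unit-diagonal triangular matrix $B=A-J$ (so $\det(A)=1+S(B^{-1})$) and invoke Theorem~\ref{thm:mainresult} to get the range $[2-F_{n-1},2+F_{n-1}]$ for $S(B^{-1})$, shifted by $1$. Your explicit handling of the lower-versus-upper triangular issue via transposition invariance of $S$, and the observation that the Remark needs only nonsingularity of $B$ rather than of $A$, are exactly the details the paper leaves implicit, and both are handled correctly.
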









\end{document}